\newtheorem{theorem}{Theorem}[section]
\newtheorem{proposition}[theorem]{Proposition}
\newtheorem{lemma}[theorem]{Lemma}
\newtheorem{corollary}[theorem]{Corollary}
\newtheorem{definition}[theorem]{Definition}
\newtheorem{remark}[theorem]{Remark}
\newcommand{\Z}{{\mathbb Z}}
\newcommand{\Q}{{\mathbb Q}}
\newcommand{\R}{{\mathbb R}}
\newcommand{\C}{{\mathbb C}}
\newcommand{\PP}{{\mathbb P}}
\newcommand{\cO}{{\mathcal O}}
\DeclareMathOperator{\rank}{{rank}}
\DeclareMathOperator{\Sec}{{Sec}}
\DeclareMathOperator{\Tan}{{Tan}}
\title{K-theory, LQEL manifolds and Severi varieties}
\author{Oliver Nash}
\date{August 2013}
\begin{document}
\maketitle

\begin{abstract}
  We use topological K-theory to study non-singular varieties with
  quadratic entry locus. We thus obtain a new proof of Russo's Divisibility Property
  for locally quadratic entry locus manifolds. In particular we obtain a K-theoretic proof
  of Zak's theorem that the dimension of a Severi variety must be 2, 4, 8 or 16
  and so resolve a conjecture of Atiyah and Berndt.
  We also show how the same methods applied to dual varieties recover the
  Landman parity theorem.
\end{abstract}

\section{Introduction}\label{Intro_sect}
  Zak's celebrated classification of Severi varieties \cite{MR1234494} establishes
  that there are only four such varieties and that they correspond to projective planes
  over the four division algebras. Taking into account the classical results relating
  K-theory, division algebras and projective planes, Atiyah and Berndt \cite{MR2039984}
  conjectured that there should be a K-theoretic proof that the dimension of a
  Severi variety was necessarily 2, 4, 8 or 16.

  By taking up an old approach of Fujita and Roberts \cite{MR630774} and
  Tango \cite{MR670136} but replacing characteristic classes with K-theory, we
  are able to provide the conjectured K-theoretic proof of the Severi variety dimension
  restriction. In fact our results sit naturally in the domain of Russo's LQEL
  manifolds \cite{MR2501303} and we provide a new K-theoretic proof of his
  Divisibility Property for LQEL manifolds.

  The method we employ is to consider the K-theoretic consequences of the existence of
  the generalized Euler sequence associated to a vector bundle.
  The generalized Euler sequence of a vector bundle $V$ over a base $B$
  is the natural exact sequence on the total space of the projectivization $\PP(V)$:
  \begin{align}\label{gen_Euler}
    0 \to \cO \to p^*V(1) \to T\PP(V) \to p^*TB \to 0
  \end{align}
  where $p : \PP(V) \to B$ is the bundle map, $p^*V(1) = p^*V \otimes \cO(1)$ and
  $\cO(1)$ is the dual of the tautological line bundle on $\PP(V)$. In the special
  case $B$ is a point this is the familiar Euler sequence on projective space
  (see e.g., \cite{MR0463157} II.8.13) and in the general case as above, it
  essentially reduces to this since $\PP(V) \to B$ is locally trivial.

  We obtain our results by taking $V$ to be the (extended) tangent bundle
  of a projective variety and noting that $\PP(V)$ also fibres over the secant variety.
  In the case that the variety is an LQEL manifold, the irreducible components of
  a general fibre of the map to the secant variety are non-singular quadrics.
  As a result, the topological
  K-theory of such a quadric carries a special relation in K-theory
  which turns out to be very restrictive.

  The problem of classifying Severi varieties was first posed by Hartshorne
  in his influential paper \cite{MR0384816} and is closely related to his
  complete intersection conjecture, op. cit. Since Hartshorne's motivation for this
  conjecture was partly topological (specifically, the Barth-Larsen theorems)
  it is tempting to wonder, in view of the results here and of Ionescu and Russo's recent
  proof \cite{MR3038714} of the complete intersection conjecture for quadratic manifolds,
  what relevance topological K-theory may have for the complete
  intersection conjecture.

\section{LQEL manifolds}\label{LQEL_section}
  We recall the basic definitions for the reader's convenience and to fix notation
  and terminology. For examples, further details and proofs of the assertions below we recommend
  Russo \cite{MR2028046,MR2501303}, Fujita and Roberts \cite{MR630774}
  and of course Zak's excellent foundational monograph \cite{MR1234494}. Our
  definitions are slightly simpler because we stick to non-singular varieties. We
  work over $\C$ throughout as we will obtain our results by
  using topological K-theory.

  \begin{definition}
    Let $Y \subseteq \PP^N$ be a non-singular irreducible projective variety with secant variety
    $\Sec(Y) \subseteq \PP^N$ and $z \in \Sec(Y) - Y$. The entry locus
    of $Y$ with respect to $z$ is defined to be:
    \begin{align*}
      \Sigma_z(Y) = \{y \in Y\mid \mbox{the line $yz$ is a tangent or secant of $Y$}\}
    \end{align*}
  \end{definition}

  The general entry locus is a projective variety
  with pure dimension equal to the secant deficiency:
  \begin{align*}
    \dim \Sigma_z(Y) = \delta = 2n+1 - \dim \Sec(Y)
  \end{align*}

  \begin{definition}
    Let $Y \subseteq \PP^N$ be a non-singular irreducible projective variety.
    Following Russo \cite{MR2501303}
    we say $Y$ is a locally quadratic entry locus (LQEL) manifold of type $\delta$ if
    each irreducible component of a general entry locus is a (non-singular,
    $\delta$-dimensional) quadric.
  \end{definition}

  \begin{definition}
    Let $Y \subseteq \PP^N$ be a non-singular irreducible projective variety with tangent
    variety $\Tan(Y) \subseteq \PP^N$ and $z \in \Tan(Y) - Y$. The tangent locus
    of $Y$ with respect to $z$ is defined to be:
    \begin{align*}
      \tau_z(Y) = \{y \in Y\mid z \in \mathbb{T}_yY \}
    \end{align*}
    where $\mathbb{T}_yY \subseteq \PP^N$ is the embedded tangent space of $Y$
    at $y$.
  \end{definition}
  The general tangent locus is a projective variety with
  pure dimension equal to the tangent deficiency:
  \begin{align*}
    \dim \tau_z(Y) = \delta_\tau = 2n - \dim\Tan(Y)
  \end{align*}

  We recall Zak's theorem that $\delta > 0$ iff $\Tan(Y) = \Sec(Y)$ so that
  in this case we have $\delta_\tau = \delta - 1$.

  \begin{lemma}
    Let $Y \subseteq \PP^N$ be an LQEL manifold of type $\delta > 0$ and $z \in \Sec(Y) - Y$
    a general point. For each irreducible component $Q$ of the entry locus $\Sigma_z(Y)$, the
    polar of $z$ with respect to $Q$ determines a non-singular hyperplane section $F$ of $Q$.
    These non-singular $(\delta-1)$-dimensional quadrics $F$ are the
    irreducible components of the tangent locus $\tau_z(Y)$.
  \end{lemma}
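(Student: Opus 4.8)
The plan is to combine the classical polarity on the quadric $Q$ with the elementary inclusion between the embedded tangent spaces of $Q$ and of $Y$, and to close the argument using the purity statement for $\tau_z(Y)$ recalled above. Since $\delta>0$ and $z$ is general, Zak's theorem gives $\Tan(Y)=\Sec(Y)$, so $z\in\Tan(Y)-Y$ and $\tau_z(Y)$ is a projective variety, pure of dimension $\delta-1$. By the LQEL hypothesis, together with the analysis of the general entry locus in Russo \cite{MR2501303}, each irreducible component $Q$ of $\Sigma_z(Y)$ is a non-singular $\delta$-dimensional quadric with $z\in\langle Q\rangle\cong\PP^{\delta+1}$. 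As $z\notin Y\supseteq Q$, the pole $z$ of the polar hyperplane $H_z\subseteq\langle Q\rangle$ does not lie on $Q$, so the hyperplane section $F:=Q\cap H_z$ is non-singular; moreover $F\neq Q$ since $Q$ spans $\langle Q\rangle$, so $F$ is a genuine non-singular quadric of dimension $\delta-1$. This settles the first two assertions.

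Next I would prove the inclusion $F\subseteq\tau_z(Y)$. For any point $y\in Q$ the embedded tangent space $\mathbb{T}_yQ\subseteq\langle Q\rangle$ is precisely the polar hyperplane of $y$ with respect to $Q$, so by symmetry of the polarity one has $y\in H_z$ if and only if $z\in\mathbb{T}_yQ$. Since $Q\subseteq Y$ and both are smooth at $y$ we have $\mathbb{T}_yQ\subseteq\mathbb{T}_yY$; hence $y\in F$ forces $z\in\mathbb{T}_yQ\subseteq\mathbb{T}_yY$, i.e. $y\in\tau_z(Y)$. As this holds for every component $Q$, we get $\bigcup_Q F\subseteq\tau_z(Y)$. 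Now each $F$ is, when $\delta\geq 2$, irreducible of dimension $\delta-1$, and when $\delta=1$ a pair of reduced points; in either case every irreducible component of every $F$ is an irreducible subvariety of $\tau_z(Y)$ of the top dimension $\delta-1$, hence — $\tau_z(Y)$ being pure of that dimension — an irreducible component of $\tau_z(Y)$.

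It remains to show that these exhaust the components, i.e. that $\tau_z(Y)\subseteq\bigcup_Q F$. First, $\tau_z(Y)\subseteq\Sigma_z(Y)$: if $z\in\mathbb{T}_yY$ then the line $\overline{yz}$ lies in $\mathbb{T}_yY$, so it is a tangent line of $Y$ at $y$ and thus $y\in\Sigma_z(Y)$. So fix $y\in\tau_z(Y)$, a component $Q$ of $\Sigma_z(Y)$ with $y\in Q$, and set $L=\overline{yz}\subseteq\langle Q\rangle$. Since $y,z\in\mathbb{T}_yY$ the line $L$ lies in $\mathbb{T}_yY$, so $L\cap Y$ has length $\geq 2$ at $y$, while $L\not\subseteq Y$ because $z\notin Y$. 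Here lies the crux, which I expect to be the main obstacle: I must rule out that $L$ is an external trisecant of $Y$ through $z$ and conclude that $L$ is already tangent to $Q$ at $y$. This follows from the fact that an LQEL manifold of positive type is scheme-theoretically cut out by quadrics (see \cite{MR2501303}): then $L\cap Y$, being cut on $L$ by quadrics without being all of $L$, has length exactly $2$, so it is the length-$2$ scheme supported at $y$; since $L\cap Q\subseteq L\cap Y$ is itself of length exactly $2$ (as $L\subseteq\langle Q\rangle$ and $L\not\subseteq Q$), it is the double point of $L$ at $y$, whence $L\subseteq\mathbb{T}_yQ$ and in particular $z\in\mathbb{T}_yQ$, i.e. $y\in H_z$ and $y\in F$. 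Therefore $\tau_z(Y)=\bigcup_Q\big(\tau_z(Y)\cap Q\big)=\bigcup_Q F$, and comparing irreducible components with the previous step identifies the components of $\tau_z(Y)$ with the quadrics $F$ (or, when $\delta=1$, with their two points). If one prefers to avoid invoking that $Y$ is cut out by quadrics, the alternative is a dimension count showing the locus of such trisecant contributions inside $Q$ has dimension $\leq\delta-2$, but establishing that cleanly seems to require essentially the same input.
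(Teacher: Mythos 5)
Your first two steps are sound and essentially match the paper: non-singularity of $F=Q\cap H_z$ because $z\notin Q$, the inclusion $\mathbb{T}_yQ\subseteq\mathbb{T}_yY$ giving $F\subseteq\tau_z(Y)$, and purity of $\tau_z(Y)$ promoting each $F$ to a union of irreducible components. The gap is exactly at the step you yourself flag as the crux. The input you invoke there --- that an LQEL manifold of type $\delta>0$ is scheme-theoretically cut out by quadrics --- is not a theorem of \cite{MR2501303}, and it is false. The LQEL condition concerns only secant behaviour, so it is preserved by isomorphic projection from a point off the secant variety, whereas quadratic generation is not: the projection of the Veronese surface $v_2(\PP^2)\subset\PP^5$ from a general point (off its cubic secant hypersurface) is a smooth surface in $\PP^4$ which is LQEL of type $\delta=1$ --- its general entry locus is a union of three smooth conics --- yet it lies on no quadric hypersurface at all; projecting $\mathbb{G}(1,5)\subset\PP^{14}$ from a general point off its Pfaffian cubic gives the same phenomenon with $\delta=4$. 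Since both the hypotheses and the conclusion of the lemma are invariant under such projections, no correct proof can rely on $Y$ being an intersection of quadrics, so your exclusion of ``external'' intersections of $L=\overline{yz}$ with $Y$ does not go through, and the fallback dimension count you mention is not supplied.

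For comparison, the paper closes this step by a different mechanism: for $z$ general, every tangent line of $Y$ through $z$ is a limit of secant lines through $z$. The entry points of those nearby secants lie in $\Sigma_z(Y)$ and converge to $y$, hence eventually lie in the component $Q$, so the limiting line $\overline{yz}$ meets $Q$ in a length-two scheme supported at $y$ and is therefore tangent to $Q$; this yields
\begin{align*}
\{y\in Q\mid z\in\mathbb{T}_yY\}=\{y\in Q\mid z\in\mathbb{T}_yQ\}=Q\cap H_z
\end{align*}
without any global quadratic equations for $Y$ (only the single quadric cutting out $Q$ in its span is used). If you want to salvage your write-up, replace the quadraticity input by this limit-of-secants argument; note also that your assertion $z\in\langle Q\rangle$ for every component $Q$, which you attribute to Russo's analysis of the entry locus, is needed again at that point and deserves an explicit justification rather than a passing citation.
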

  \begin{proof}[Proof:]
    Let $F$ be an irreducible component of $\tau_z(Y)$. Since $\tau_z(Y) \subset \Sigma_z(Y)$
    we must have $F \subset Q$ for some irreducible component $Q$ of $\Sigma_z(Y)$. Since
    any tangent line of $Y$ passing through $z$ can be obtained as a limit of secants of passing
    through $z$ we have:
    \begin{align*}
      F &= \{y \in Q\mid z \in \mathbb{T}_yY\}\\
        &= \{y \in Q\mid z \in \mathbb{T}_yQ\}\\
        &= Q \cap H_z
    \end{align*}
    where $H_z = \{x \in M\mid q(x, z) = 0\}$ is the polar of $z$ with respect to $Q$,
    $M \subseteq \PP^N$ is the $(\delta+1)$-dimensional linear span of $Q$ and $q$ is the
    quadratic form on $M$ cutting out $Q$.
  \end{proof}

  Our key observation is that an irreducible component of a general tangent locus supports
  some rather special topology as a result of the ambient LQEL geometry.
  \begin{proposition}\label{LQEL_k-thy_divisibility}
    Let $Y \subseteq \PP^N$ be an $n$-dimensional LQEL manifold of type
    $\delta > 0$ and let $F \subseteq Y$ be an irreducible component of
    a general tangent locus. Then:
    \begin{align*}
      \mbox{$1 + \cO(1)$ divides $2(n-\delta)$ in $K(F)$}
    \end{align*}
    where $K(F)$ is the topological (complex) K-theory of $F$ (with its analytic
    topology) and $\cO(1)$ is the class in $K(F)$ represented by the restriction of
    the hyperplane section bundle to $F$.
  \end{proposition}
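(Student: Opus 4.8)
The plan is to restrict the generalized Euler sequence (\ref{gen_Euler}) of the extended tangent bundle of $Y$ to $F$, compare the resulting K-theory identity with the ordinary Euler sequence of $\PP^N$, and feed in the known K-theory class of the quadric $F$. After that the divisibility drops out by pure algebra, using only that line bundles are units in $K(F)$.

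First I set up the geometry. Let $V = \hat{T}Y$ be the extended tangent bundle of $Y \subseteq \PP^N$, i.e.\ the rank $n+1$ subbundle of $\cO_Y^{N+1}$ whose fibre at $y$ is the affine cone over the embedded tangent space $\mathbb{T}_yY$, and let $p : \PP(V) \to Y$ and $\pi : \PP(V) \to \PP^N$ be the two projections, the latter having image $\Tan(Y) = \Sec(Y)$ (here $\delta > 0$ is used). Since $V \subseteq \cO_Y^{N+1}$, the tautological subbundle of $p^*V$ is cut out from that of $\cO_{Y\times\PP^N}^{N+1}$, so $\cO_{\PP(V)}(1) = \pi^*\cO_{\PP^N}(1)$. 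For a general $z$, $\pi^{-1}(z)$ is set-theoretically the tangent locus $\tau_z(Y)$, of dimension $\delta_\tau = \delta-1 = \dim\PP(V) - \dim\Tan(Y)$; hence $\pi$ is equidimensional over a dense open of $\Tan(Y)$ and, by generic smoothness in characteristic $0$, $\pi^{-1}(z)$ is smooth, hence reduced and equal to $\tau_z(Y)$ with its components the quadrics of the Lemma. Thus $F$ is a connected component of the smooth fibre $\pi^{-1}(z)$ over a smooth point of $\Tan(Y)$: $p|_F : F \hookrightarrow Y$ is the inclusion, $\pi|_F$ is constant, so $\cO_{\PP(V)}(1)|_F$ is trivial, and $N_{F/\PP(V)} \cong \cO_F^{\oplus(2n-\delta+1)}$ is trivial of rank $\dim\Tan(Y) = 2n - \delta_\tau = 2n-\delta+1$ by Zak's theorem.

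Restricting (\ref{gen_Euler}) to $F$ (with $B = Y$, $V = \hat{T}Y$), the term $p^*V(1)|_F$ becomes $\hat{T}Y|_F$ because $\cO_{\PP(V)}(1)|_F$ is trivial, so in $K(F)$
\[
  [\hat{T}Y|_F] \;=\; 1 + [T\PP(V)|_F] - [TY|_F] \;=\; (2n-\delta+2) - [N_{F/Y}],
\]
using $[T\PP(V)|_F] = [TF] + (2n-\delta+1)$ and $[TY|_F] = [TF] + [N_{F/Y}]$. On the other hand the Euler sequence of $\PP^N$ together with the definition of $\hat{T}Y$ gives $[\hat{T}Y] = \cO(-1)\,(1 + [TY])$ in $K(Y)$, so restricting to $F$ yields $[\hat{T}Y|_F] = \cO(-1)\bigl(1 + [TF] + [N_{F/Y}]\bigr)$, where $\cO(1)$ now denotes the genuine hyperplane class of $F$. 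Equating the two expressions and multiplying through by $\cO(1)$,
\[
  1 + [TF] + (1+\cO(1))\,[N_{F/Y}] \;=\; (2n-\delta+2)\,\cO(1).
\]
Now insert the class of the smooth $(\delta-1)$-dimensional quadric $F \subseteq \PP^{\delta}$, namely $[TF] = (\delta+1)\cO(1) - 1 - \cO(1)^2$ (from $N_{F/\PP^{\delta}} = \cO_F(2)$); substituting and simplifying collapses the identity to
\[
  (1+\cO(1))\bigl([N_{F/Y}] - \cO(1)\bigr) \;=\; 2(n-\delta)\,\cO(1).
\]
Since $\cO(1)$ is a unit in $K(F)$, multiplying by $\cO(-1)$ gives $(1+\cO(1))\bigl(\cO(-1)[N_{F/Y}] - 1\bigr) = 2(n-\delta)$, which is exactly the claimed divisibility.

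The main obstacle is the geometric input that $N_{F/\PP(V)}$ is trivial: this requires knowing that $F$ occurs as a connected component of a \emph{general}, hence smooth, fibre of $\pi$ over a smooth point of $\Tan(Y)$, which rests on the equidimensionality of $\pi$ over a dense open (the generic fibre dimension $\delta-1$ matching $\dim\PP(V) - \dim\Tan(Y)$) together with generic smoothness. One must also keep apart the two occurrences of $\cO(1)$ — the tautological class on $\PP(V)$ versus the hyperplane class on $F$ — which differ precisely because $\pi|_F$ is constant; once that is pinned down, everything else is bookkeeping with the two Euler sequences and the standard computation of the K-theory class of a quadric.
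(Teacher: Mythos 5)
Your proposal is correct and follows essentially the same route as the paper: the generalized Euler sequence on $\PP(\mathbb{\hat T}Y)$, triviality of $\cO_{\PP(\mathbb{\hat T}Y)}(1)$ and of the normal bundle along a component $F$ of a general fibre over $\Sec(Y)=\Tan(Y)$, the sequence relating $\mathbb{\hat T}Y$ to $TY$, and the quadric structure of $F$, then pure K-theoretic bookkeeping. The only (harmless) differences are that you spell out the generic-smoothness justification the paper leaves implicit and eliminate in favour of $N_{F|Y}$ rather than $\mathbb{\hat T}Y$, which directly yields the refinement $W = N_{Q|Y}(-1)$ that the paper records separately in Remark \ref{refined_divis_rmk}.
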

  \begin{proof}[Proof:]
    We take up the ideas of \cite{MR630774} and \cite{MR670136} except that
    instead of computing Chern classes, we derive a relation in K-theory.
    Thus let:
    \begin{align*}
      \Theta = \{(y, z) \in Y \times \Sec(Y) \mid z \in \mathbb{T}_yY\}
    \end{align*}
    Recall that the embedded tangent space $\mathbb{T}_yY \subset \PP^N$ used above
    is related to the intrinsic tangent space $TY$ by the exact sequence of bundles:
    \begin{align}\label{proj_tgt_space}
      0 \to \cO \to \mathbb{\hat T}Y(1) \to TY \to 0
    \end{align}
    where $\mathbb{\hat T}_yY \subset \C^{N+1}$
    is the vector subspace lying over $\mathbb{T}_yY \subset \PP^N$ and
    $\mathbb{\hat T}Y(1) = \mathbb{\hat T}Y \otimes \cO(1)$.
    We thus see that\footnote{Those
    comparing with \cite{MR630774} should note that the authors realize $\Theta$
    as $\PP(E^*)$ where $E = \mathbb{\hat T}^*Y(-1)$ (though they use Grothendieck's
    convention for projectivization so the dual on $E$ does not appear). It is slightly
    simpler to realize $\Theta$ as we do since then the tautological bundle
    $\cO_\Theta(-1)$ (which appears later) is not twisted.} $\Theta = \PP(\mathbb{\hat T}Y)$.

    Note that we have natural maps:
    \begin{align}\label{Theta_fibrations}
      \xymatrix{
          & \Theta \ar[dl]_f \ar[dr]^g & \\
        Y &                            & \Sec(Y)
      }
    \end{align}
    and that the fibre of $g$ above a point $z \in \Sec(Y) - Y$ is naturally identified
    by $f$ with the corresponding tangent locus in $Y$.

    With this setup in place, the proof is mostly formal.
    The result is a consequence of the relation that exists in $K(F)$ as a result
    of the generalized Euler sequence \eqref{gen_Euler} with $V = \mathbb{\hat T}Y$
    restricted to $F$ together with the fact that $F$ is a quadric.
    We thus consider the following exact sequence on $\Theta$:
    \begin{align}\label{four_term_exact_seq}
      0 \to \cO \to f^*\mathbb{\hat T}Y(1) \to T\Theta \to f^* TY \to 0
    \end{align}
    Furthermore there is a natural isomorphism
    $\cO_\Theta(1) \simeq g^*\cO(1)$ and so when we restrict \eqref{four_term_exact_seq}
    to an irreducible component $F$ of a fibre of the map $g$ we have:
    \begin{align}\label{tautological_bundle_F}
      f^*\mathbb{\hat T}Y(1)|_F \simeq \mathbb{\hat T}Y|_F
    \end{align}

    Now we simply collect up
    all the natural exact sequences to hand and interpret them as relations
    in $K(F)$ (forgetting the holomorphic structures).
    At the risk of being overly explicit, we list all the exact sequences we need below.
    We use the notation $\PP^{\delta}$ to denote
    the linear subspace of $\PP^N$ that is the span of the quadric $F$:
    \begin{equation*}
      \begin{CD}
        0 @>>> \cO_{\PP^{\delta}} @>>> \cO_{\PP^{\delta}}(1)^{\delta+1} @>>> T\PP^{\delta}       @>>> 0\\
        0 @>>> TF                 @>>> T\PP^{\delta}|_F                 @>>> \cO_F(2)            @>>> 0\\
        0 @>>> TF                 @>>> T\Theta|_F                       @>>> \cO_F^{2n+1-\delta} @>>> 0\\
      \end{CD}
    \end{equation*}

    Regarding these three sequences together with
    \eqref{proj_tgt_space} and \eqref{four_term_exact_seq}
    as five equations in $K(F)$ in five unknowns we can solve for the
    class of $\mathbb{\hat T}Y$. Bearing in mind
    \eqref{tautological_bundle_F} we get the following equation in $K(F)$:
    \begin{align*}
      \mathbb{\hat T}Y(1+\cO(1)) &= 2n+2 - \delta + (\delta + 1)\cO(1) - \cO(2)\\
                                 &= 2(n-\delta) + (2 + \delta - \cO(1))(1 + \cO(1))
    \end{align*}

    Thus, letting $W = \mathbb{\hat T}Y - 2 - \delta + \cO(1)$ we have:
    \begin{align}\label{division_equation}
      (1+\cO(1))W = 2(n-\delta)
    \end{align}
    which proves the result.
  \end{proof}

  We can already extract useful information from this proposition
  using characteristic classes. Taking the first Chern class of the identity
  \eqref{division_equation} we get:
  \begin{align*}
    2c_1(W) = -(n-\delta)c_1(\cO(1))
  \end{align*}
  Thus if $\dim F \ge 3$ since $c_1(\cO(1)) \in H^2(F, \Z) \simeq \Z$
  is a generator we must have $2\mid n-\delta$ as integers\footnote{In fact
  although $H^2(F, \Z)$ is not cyclic for $\dim_\C F = 2$ we can still deduce that
  $2\mid n-\delta$ in this case since $c_1(\cO(1))$ is not even and thus the
  relation holds as long as $\delta \ge 3$.}.
  However as we shall see a much stronger relationship holds.

  Fujita and Roberts \cite{MR630774} and Tango \cite{MR670136} essentially pursued
  this characteristic class approach (for Severi varieties) but only obtained partial results.
  To bring this approach to fruition it would be necessary to fully characterize the
  image of $K(F)$ under the Chern character, as a maximal-rank lattice in $H^*(F, \Q)$.
  In fact it is easier to dispense with ordinary cohomology entirely and stay in K-theory.

  Thus to take full advantage of the result of proposition \ref{LQEL_k-thy_divisibility}
  we need to know the ring structure of $K(F)$ explicitly. We have relegated a discussion
  of this purely topological result to proposition \ref{quadric_k_theory} in
  appendix \ref{kthy_quadric_appendix}. With this in hand we can state:
  \begin{corollary}\label{LQEL_divisibility_corollary}
    Let $Y \subseteq \PP^N$ be an $n$-dimensional LQEL manifold of type $\delta \ge 3$ then:
    \begin{align}\label{LQEL_divisibility}
      \left. 2^{\left[\frac{\delta-1}{2}\right]} ~ \right| ~ n - \delta \mbox{\quad in $\Z$}
    \end{align}
    In other words, we have a new proof of Russo's
    Divisibility Property for LQEL manifolds (see \cite{MR2501303} Theorem 2.8 (2))
    showing that it holds for topological reasons.
  \end{corollary}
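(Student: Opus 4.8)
The plan is to feed the divisibility relation $(1+\cO(1))W = 2(n-\delta)$ from Proposition \ref{LQEL_k-thy_divisibility} into an explicit description of the ring $K(F)$, where $F$ is a non-singular quadric of dimension $\delta - 1 \ge 2$. So the first step is to recall (from Proposition \ref{quadric_k_theory} in Appendix \ref{kthy_quadric_appendix}) the structure of $K(F)$ as a ring: it is generated over $\Z$ by the hyperplane class $h = \cO(1) - 1 \in \tilde K(F)$ (possibly together with the classes of the spinor bundles when $\delta - 1$ is even, but those will not interfere with the ideal computation), and the key numerical fact is that $h$ is nilpotent with $h^{\delta} = 0$ while $h^{\delta - 1}$ generates a copy of $\Z$ (or a known finite group) inside $\tilde K(F)$. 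In particular $1 + \cO(1) = 2 + h$, so we must understand when $2 + h$ divides the integer $2(n-\delta)$ in $K(F)$.

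The second step is the arithmetic of the element $2 + h$ in $\Z[h]/(h^{\delta})$ (the relevant subring, after checking that the spinor generators can be ignored for divisibility of a class pulled back from a point). Since $h$ is nilpotent of order $\delta$, the element $2 + h$ is a unit in $\Z[h]/(h^{\delta})$ after inverting $2$, with inverse $\tfrac12 \sum_{j=0}^{\delta-1} (-1)^j 2^{-j} h^j$. Clearing denominators, one finds that $(2+h)$ divides an integer $m$ in $\Z[h]/(h^{\delta})$ precisely when $2^{\delta - 1} \mid m$ — the point being that to cancel the top term $h^{\delta - 1}$ in the putative quotient one is forced to introduce a denominator of $2^{\delta - 1}$, and conversely $2^{\delta-1}/(2+h) = \sum_{j=0}^{\delta - 1}(-1)^j 2^{\delta - 1 - j} h^j$ is an honest element of $\Z[h]$. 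Applying this with $m = 2(n-\delta)$ gives $2^{\delta - 1} \mid 2(n-\delta)$, i.e. $2^{\delta - 2} \mid n - \delta$. To recover the sharper exponent $\left[\tfrac{\delta - 1}{2}\right]$ — which is genuinely smaller than $\delta - 2$ for $\delta \ge 5$ — one must use that $h$ satisfies a further relation: on a quadric of dimension $\delta - 1$ the class $h$ is not merely nilpotent but its powers satisfy the "$2$-divisibility halving" coming from $\cO(2) = \cO(1)^{\otimes 2}$ being the restriction of an ambient bundle, so that the relevant ring is closer to $\Z[h]/(h^{\delta}, \text{relations making } h^{k} \text{ divisible by } 2^{\lfloor k/2\rfloor})$; carrying the same division-with-denominators computation through this finer ring yields $2^{\left[(\delta-1)/2\right]} \mid n - \delta$.

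The third step is bookkeeping: verify the statement is vacuous or trivial at the low end ($\delta = 3$ gives $2 \mid n - \delta$, already obtained via $c_1$ in the remark following the proposition, and consistent with $[(\delta-1)/2] = 1$), and confirm the exponent matches Russo's $2^{[(\delta-1)/2]}$ so that we genuinely recover Theorem 2.8(2) of \cite{MR2501303}.

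The main obstacle is the passage from the crude bound $2^{\delta-2}$ to the sharp bound $2^{[(\delta-1)/2]}$: this requires the precise ring structure of $K(F)$ for a quadric, in particular the exact $2$-adic divisibility of the powers $h^k$ of the hyperplane class, and (for even-dimensional $F$) a check that the spinor bundle generators do not weaken the conclusion. All of that is exactly what Proposition \ref{quadric_k_theory} is there to supply, so once that computation is in hand the corollary follows by the elementary divisibility argument sketched above.
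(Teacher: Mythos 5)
Your overall plan (feed the relation $(1+\cO(1))W=2(n-\delta)$ of Proposition \ref{LQEL_k-thy_divisibility} into an explicit description of $K(F)$, $F$ the $(\delta-1)$-dimensional quadric) is the paper's plan, but the way you execute the ring computation has a genuine gap. You propose to test the divisibility inside the subring $\Z[h]/(h^{\delta})$ generated by $h=\cO(1)-1$, asserting that the spinor classes ``do not interfere'' and ``can be ignored.'' This is not legitimate: the class $W$ furnished by the proposition is $\mathbb{\hat T}Y|_F-2-\delta+\cO(1)$ (equivalently $N_{Q|Y}(-1)$, Remark \ref{refined_divis_rmk}), and nothing places it in the subring generated by $h$; divisibility in $K(F)$ is strictly weaker than divisibility in that subring. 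Concretely, your intermediate conclusion is false as a statement about LQEL manifolds: a correct inversion in $\Z[h]/(h^{\delta})$ gives $(2+h)\sum_{j=0}^{\delta-1}(-1)^j2^{\delta-1-j}h^j=2^{\delta}$, so your argument would force $2^{\delta-1}\mid n-\delta$ (you state $2^{\delta-2}$, a slip in the same direction), and this is violated by the $16$-dimensional Severi variety, where $\delta=8$ and $n-\delta=8$. The proposed repair --- extra ``halving'' relations making $h^{k}$ divisible by $2^{\lfloor k/2\rfloor}$ --- is both unproved and not the actual structure: by Proposition \ref{quadric_k_theory} the subring generated by $h$ really is $\Z[h]/(h^{\delta})$ (e.g.\ $2^{m/2}X=2^{m-1}-2^{m-2}L+\cdots-L^{m-1}$ shows $L^{m-1}\neq0$ and that no such relations hold among powers of $L$ alone); also note that spinor classes are needed in the $\Z$-basis for both parities of $\dim F$, not only when $\dim F$ is even.

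What actually closes the argument, and what the paper does via Corollary \ref{general_quadric_division_corollary}, is the opposite of ignoring the spinor bundles: with $m=\delta$ and $X$ the spinor class (for $m$ even) or $X^{+}+X^{-}$ (for $m$ odd), one has the identity $(1+\cO(1))X=(2+L)X=2^{\left[\frac{m+1}{2}\right]}$ in $K(F)$. Multiplying \eqref{division_equation} by $X$ gives $2(n-\delta)X=2^{\left[\frac{\delta+1}{2}\right]}W$, and since $X$ (resp.\ each of $X^{\pm}$) is a member of the $\Z$-basis $1,L,\dots,L^{\delta-2},X$ (resp.\ $\dots,X^{+},X^{-}$), comparing the coefficient of $X$ (resp.\ $X^{+}$) forces $2^{\left[\frac{\delta+1}{2}\right]}\mid 2(n-\delta)$, i.e.\ exactly \eqref{LQEL_divisibility}. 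So the sharp exponent comes from the spinor generator being annihilated (up to the integer $2^{[(m+1)/2]}$) by $1+\cO(1)$, which is precisely why divisibility in the full ring $K(F)$ is so much weaker than in $\Z[h]/(h^{\delta})$; as written, your outline does not prove the corollary.
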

  \begin{proof}[Proof:]
    This is an immediate consequence of proposition \ref{LQEL_k-thy_divisibility}
    together with corollary \ref{general_quadric_division_corollary}.
  \end{proof}

  \begin{remark}\label{refined_divis_rmk}
    In fact proposition \ref{LQEL_k-thy_divisibility}
    can be refined slightly: the class in $K(F)$ denoted $W$ in \eqref{division_equation}
    can be represented by the normal bundle of the entry locus
    (restricted to the tangent locus). Indeed if $F \subset Q \subset Y$ is the
    inclusion of a (general) tangent locus in an entry locus of $Y$ then we have
    the following natural exact sequences involving normal bundles:
    \begin{equation*}
      \begin{CD}
        0 @>>> \mathbb{\hat T}Y   @>>> \cO^{N+1}      @>>> N_{Y|\PP^N}(-1)        @>>> 0\\
        0 @>>> N_{F|Y}            @>>> N_{F|\PP^N}    @>>> N_{Y|\PP^N}            @>>> 0\\
        0 @>>> N_{F|\PP^{\delta}} @>>> N_{F|\PP^N}    @>>> N_{\PP^{\delta}|\PP^N} @>>> 0\\
        0 @>>> N_{F|Q}            @>>> N_{F|Y}        @>>> N_{Q|Y}                @>>> 0\\
      \end{CD}
    \end{equation*}
    and since $N_{F|Q} \simeq \cO(1)$, $N_{F|\PP^{\delta}} \simeq \cO(2)$,
    $N_{\PP^{\delta}|\PP^N} \simeq \cO(1)^{N - \delta}$ we get:
    \begin{align*}
      W = N_{Q|Y}(-1) \mbox{\quad in $K(F)$}
    \end{align*}

    In other words, we can refine proposition \ref{LQEL_k-thy_divisibility} to:
    \begin{align*}
      N_{Q|Y}\oplus N_{Q|Y}(-1) \mbox{\quad is topologically stably trivial restricted to $F$}
    \end{align*}
    Also, there is presumably a holomorphic counterpart of this statement, just as there
    is for the analogous statement \eqref{normal_bundle_iso_Kthy} discussed in the next
    section (though it is certainly not that the above holds as holomorphic bundles).
  \end{remark}

  Finally we wish to comment on Severi varieties. We thus recall:
  \begin{definition}
    A Severi variety is a non-degenerate non-singular irreducible
    variety $Y \subseteq \PP^N$ of dimension $n$ such that
    $3n = 2(N - 2)$ and $\Sec(Y) \ne \PP^N$.
  \end{definition}
  As we have noted, Zak \cite{MR1234494} provided a beautiful classification of Severi varieties
  showing that there are just four and that they correspond to projective planes
  over the four division algebras.
  The hard part of the classification is proving that $n \in \{2, 4, 8, 16\}$.
  
  The first step toward understanding Severi varieties is the following result
  of Zak:
  \begin{proposition}
    A Severi variety is an LQEL\footnote{In fact Zak's result is slightly stronger:
    a Severi variety is a QEL manifold (in the terminology of \cite{MR2501303}) i.e.,
    the entry loci are irreducible.} manifold
    of type $\delta = n/2$.
  \end{proposition}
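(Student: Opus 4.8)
The plan is to verify the two conditions in the definition of an LQEL manifold separately: first that the secant defect of $Y$ equals $n/2$, and second that the irreducible components of a general entry locus are smooth $\delta$-dimensional quadrics. The defect is a dimension count resting on Zak's linear normality theorem, while the quadric structure is the genuinely geometric statement, and I expect the latter to be the main obstacle.

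\emph{The defect.} From $3n = 2(N-2)$ we have $N = 3n/2 + 2$. By the formula for the dimension of a general entry locus recalled above, $\delta = 2n+1-\dim\Sec(Y)$. Since $\Sec(Y)\neq\PP^N$ by definition of a Severi variety, $\dim\Sec(Y)\le N-1$, and hence $\delta \ge 2n+2-N = n/2$. For the reverse inequality I would invoke Zak's theorem on tangencies, which for a smooth nondegenerate $n$-fold $Y\subseteq\PP^N$ with $\Sec(Y)\neq\PP^N$ gives $\dim\Sec(Y)\ge 3n/2+1$; for a Severi variety this must be an equality, $\dim\Sec(Y) = N-1$, so $\Sec(Y)$ is a hypersurface and $\delta = 2n+1-\dim\Sec(Y) = n/2$.

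\emph{The quadric structure.} Fix a general point $z\in\Sec(Y)-Y$ and set $\Sigma = \Sigma_z(Y)$, of pure dimension $\delta = n/2$. I would argue as follows. First, $\Sigma$ is smooth: for general $z$ it arises, via the two projections of a suitable secant incidence variety, as (a union of components of) a general fibre of a dominant morphism from a smooth variety, so generic smoothness applies. Second, $z\in\langle\Sigma\rangle$, since $z$ lies on a genuine secant line joining two points of $\Sigma$; and because $\delta>0$ a general $y\in\Sigma$ avoids the tangent locus $\tau_z(Y)$, so $\overline{yz}$ is an honest secant of $Y$ and meets $\Sigma$ in at least two points. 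Third, using Terracini's lemma together with Zak's theorem on tangencies one shows that the tangency of $Y$ along $\Sigma$ is as constrained as possible, so that $\langle\Sigma\rangle$ has dimension exactly $\delta+1$. Finally, consider projection from $z$ inside $\PP^{\delta+1}=\langle\Sigma\rangle$: since $z\notin\Sigma$ is general, the cone over $\Sigma$ with vertex $z$ fills $\PP^{\delta+1}$, so every line through $z$ meets $\Sigma$, and by the previous point the general such line meets it in at least two points; Zak's analysis of the second fundamental form of $Y$ shows there are exactly two. Hence $\deg\Sigma = 2$, so $\Sigma$ is a quadric hypersurface in $\PP^{\delta+1}$, and being smooth it is a smooth $\delta$-dimensional quadric, in particular irreducible (so one even obtains the stronger QEL property noted in the footnote).

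The main obstacle is the last step, namely pinning down the degree of the entry locus; this is the technical heart of Zak's work and the place where his theorem on tangencies and his control of the second fundamental form are used essentially, whereas everything else is elementary numerology together with standard applications of generic smoothness and Terracini's lemma. As the statement is due to Zak (\cite{MR1234494}; see also Russo \cite{MR2028046, MR2501303}), in the body of the paper I would record it with a citation, the outline above serving to indicate why it holds and which single step genuinely goes beyond bookkeeping.
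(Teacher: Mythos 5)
Your proposal ends exactly where the paper does: the paper's entire proof of this proposition is a citation to Zak (\cite{MR1234494}, Proposition 2.1) and Russo (\cite{MR2028046}, Proposition 3.2.3), and you likewise propose to record the statement with a citation, so the approaches coincide. Your accompanying sketch (the defect count via $3n=2(N-2)$ and Zak's linear normality/tangency theorems, then the span and degree of the entry locus) is a fair summary of Zak's argument, with the minor caveat that smoothness of the general entry locus is not a bare application of generic smoothness, since $\Sigma_z$ is the image in $Y$ of a fibre of the secant incidence correspondence rather than a fibre itself.
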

  \begin{proof}[Proof:] See \cite{MR1234494} proposition 2.1 or \cite{MR2028046} proposition 3.2.3
  \end{proof}

  Our motivation for this work was the conjecture of Atiyah and Berndt (\cite{MR2039984},
  pp. 25,26) that there should be a K-theoretic proof of the dimension restriction
  for Severi varieties:
  \lq\lq \emph{There is a striking resemblance between Zak's theorem in complex algebraic
  geometry and the classical results about division algebras and projective planes.
  [...]
  One is therefore tempted to expect a K-theory proof of Zak's
  theorem}\rq\rq.

  For emphasis we thus explicitly state:
  \begin{corollary}
    Let $Y \subset \PP^N$ be an $n$-dimensional Severi variety, then $n  \in \{ 2, 4, 8, 16\}$.
  \end{corollary}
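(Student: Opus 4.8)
The plan is to read the dimension restriction off the two structural facts already in hand. By the proposition above, a Severi variety $Y \subseteq \PP^N$ of dimension $n$ is an LQEL manifold of type $\delta = n/2$; and by Corollary \ref{LQEL_divisibility_corollary}, an LQEL manifold of type $\delta \geq 3$ satisfies $2^{\left[\frac{\delta-1}{2}\right]} \mid n - \delta$. Since for a Severi variety $n - \delta = n - n/2 = n/2 = \delta$, this divisibility specialises to the self-referential statement $2^{\left[\frac{\delta-1}{2}\right]} \mid \delta$, and the whole corollary becomes an exercise in elementary $2$-adic arithmetic. So the first step is simply to substitute $\delta = n/2$ into the Divisibility Property, and the second is to solve the resulting inequality on valuations.

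For the arithmetic, I would first dispose of the small cases: from $3n = 2(N-2)$ one has $n$ even, and non-degeneracy forces $n \geq 2$ (a point is degenerate in $\PP^2$ and $n = 1$ gives no integral $N$), so $\delta = n/2 \geq 1$. If $\delta \in \{1,2\}$ then $n = 2\delta \in \{2,4\}$ and there is nothing more to prove. Assume then $\delta \geq 3$, so that $2^{\left[\frac{\delta-1}{2}\right]} \mid \delta$, equivalently $\left[\frac{\delta-1}{2}\right] \leq v_2(\delta)$. If $\delta$ were odd then $v_2(\delta) = 0$, forcing $\left[\frac{\delta-1}{2}\right] = 0$ and hence $\delta \leq 2$, a contradiction; so $\delta$ is even. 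Write $\delta = 2^a m$ with $m$ odd and $a \geq 1$. Then $\left[\frac{\delta-1}{2}\right] = \tfrac{\delta}{2} - 1 = 2^{a-1}m - 1$, so the constraint reads $2^{a-1} m \leq a + 1$. For $m \geq 3$ this already fails at $a = 1$ (and a fortiori for larger $a$), while for $m = 1$ it holds exactly for $a \in \{2,3\}$, since $2^{a-1} > a+1$ once $a \geq 4$. Hence $\delta \in \{4, 8\}$ and $n = 2\delta \in \{8, 16\}$. Combining with the small cases gives $n \in \{2,4,8,16\}$.

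There is really no obstacle in this final step: all the substance — the K-theoretic relation of Proposition \ref{LQEL_k-thy_divisibility} and the ring structure of $K(F)$ for a quadric used in Corollary \ref{LQEL_divisibility_corollary} — has already been carried out, and what remains is bookkeeping with $2$-adic valuations together with the observation $n - \delta = \delta$ that is special to Severi varieties. The only point worth a word of care is that Corollary \ref{LQEL_divisibility_corollary} is stated only for $\delta \geq 3$, so the cases $\delta = 1$ (the Veronese surface, $n=2$) and $\delta = 2$ ($n=4$) must be handled by the trivial direct observation $n = 2\delta$ rather than by the divisibility.
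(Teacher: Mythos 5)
Your proposal is correct and follows essentially the same route as the paper: substitute $\delta = n/2$ (so $n-\delta=\delta$) into the Divisibility Property of Corollary \ref{LQEL_divisibility_corollary} and finish with elementary $2$-adic bookkeeping, handling $n\in\{2,4\}$ (i.e.\ $\delta\le 2$) separately — the paper merely compresses this into ``$4\mid n$ and thence $2^{n/4}\mid n$''. One harmless slip: the inequality $2^{a-1}m\le a+1$ with $m=1$ also holds at $a=1$ (giving $\delta=2$), so ``exactly for $a\in\{2,3\}$'' is only true under your standing assumption $\delta\ge 3$, which you have already imposed.
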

  \begin{proof}[Proof:]
    By definition $n$ is even and if $n > 4$ then by \eqref{LQEL_divisibility}
    with $\delta = n/2$
    we immediately find $4 \mid n$ and thence $2^{n/4} \mid n$ from which the result follows.
  \end{proof}

  We thus settle the conjecture affirmatively. Moreover, granting the purely topological
  result \ref{quadric_k_theory} describing the ring structure of the K-theory of
  the quadric, our methods provide an extremely short (and easy) proof that the dimension of a
  Severi variety must be as above.

  For the sake of completeness we provide the chronology of proofs of this result. It
  has been proved by:
  \begin{itemize}
    \item Zak (c.1982) \cite{MR1234494} (see also \cite{MR808175}) who used a detailed
          algebro-geometric study of the entry loci and their mutual intersection properties.
    \item Landsberg (1996) \cite{MR1422359} who studied the local differential geometry via
          the second fundamental form and appealed to classification of Clifford modules.
    \item Chaput (2002) \cite{MR1900320} who showed how to see a priori that a Severi
          variety is projectively homogeneous.
    \item Russo (2009) \cite{MR2501303} who established corollary \ref{LQEL_divisibility_corollary}
          by inductively studying the variety of lines through a point in an LQEL manifold.
    \item Schillewaert, Van Maldegham (2013) \cite{SchillewaertVanMaldegham}
         who show how to obtain the classification over arbitrary fields using only the
         axioms of what they call a Mazzocca-Melone set.
  \end{itemize}

  \begin{remark}
    We also remark that, as noted in \cite{MR1832903} \S 7, for a Severi variety
    the map $\Theta \to \Sec(Y)$ considered in the proof of proposition
    \ref{LQEL_k-thy_divisibility} is an example of a desingularization that Kempf
    \cite{MR0424841} calls collapsing a vector bundle.
  \end{remark}

\section{Dual varieties}\label{dual_vars_section}
  Proposition \ref{LQEL_k-thy_divisibility} is really just an examination of the consequences
  that exist in $K$-theory as a result of the relation obtained from the
  generalized Euler sequence on the bundle of embedded tangent spaces.

  However there is another bundle of embedded linear spaces associated to any non-singular
  variety, the (twisted) conormal bundle. I.e., if $N_{Y|\PP^N}$ is the normal bundle
  of a non-singular variety $Y \subseteq \PP^N$ and $y \in Y$ then there is a natural
  embedding of the fibre:
  \begin{align*}
    \PP(N_{Y|\PP^N}^*)_y \subseteq {\PP^N}^*
  \end{align*}
  It is thus natural to
  examine what consequences the generalized Euler sequence for the
  projectivized conormal bundle has in $K$-theory.

  Unsurprisingly, we will end up recovering known results (the Landman parity theorem
  and a weak version of a result due to Ein) but it is instructive to see the
  parallels with section \ref{LQEL_section} and to obtain these results with such ease.

  We thus define $\Phi = \PP(N_{Y|\PP^N}^*(1))$ and note that naturally
  $\Phi \subseteq Y\times Y^* \subseteq \PP^N\times {\PP^N}^*$ where $Y^*$ is the dual
  variety of $Y$. The analogue of the diagram
  \eqref{Theta_fibrations} in this case is then:
  \begin{align}\label{Phi_fibrations}
    \xymatrix{
        & \Phi \ar[dl]_f \ar[dr]^g & \\
      Y &                          & Y^*
    }
  \end{align}
  This time the fibre of $g$ above a general point $H \in Y^*$ is the contact locus
  $C_H(Y)$. Identifying this fibre with its image under $f$ we have:
  \begin{align*}
    C_H(Y) = \{y \in Y\mid \mathbb{T}_yY \subseteq H\}
  \end{align*}
  The contact locus is well known to be a linear space of dimension $k = N - 1 - \dim Y^*$,
  the dual deficiency of $Y$.
  Since we will obtain a relation in $K(C_H(Y))$ we must assume $k > 0$ in order to
  have non-trivial content.

  \begin{proposition}
    Let $Y \subset \PP^N$ be a irreducible non-singular variety of dual deficiency $k > 0$,
    let $H \in Y^*$ be a general point and let $N_{C|Y}$ be the normal bundle
    of the contact locus $C_H(Y)$ in $Y$ then we have:
    \begin{align*}
      N_{C|Y} = N_{C|Y}^*(1) \mbox{\quad in $K(C_H(Y))$}
    \end{align*}
  \end{proposition}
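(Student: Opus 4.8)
The plan is to mimic the proof of Proposition \ref{LQEL_k-thy_divisibility} almost verbatim, replacing the bundle of embedded tangent spaces $\mathbb{\hat T}Y$ by the twisted conormal bundle $N^*_{Y|\PP^N}(1)$ and $\Theta$ by $\Phi = \PP(N^*_{Y|\PP^N}(1))$. The starting point is the generalized Euler sequence \eqref{gen_Euler} with $V = N^*_{Y|\PP^N}(1)$ over $B = Y$, namely
\begin{align*}
  0 \to \cO \to f^*N^*_{Y|\PP^N}(1)\otimes\cO_\Phi(1) \to T\Phi \to f^*TY \to 0.
\end{align*}
As in the LQEL case, the point is that $\cO_\Phi(1) \simeq g^*\cO_{{\PP^N}^*}(1)$, and when we restrict to the fibre $C = C_H(Y)$ of $g$ over a general $H \in Y^*$ this becomes trivial, so $f^*N^*_{Y|\PP^N}(1)\otimes\cO_\Phi(1)|_C \simeq N^*_{Y|\PP^N}(1)|_C$ in $K(C)$.

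Next I would collect the relevant short exact sequences restricted to $C$ and read them as relations in $K(C)$. Since $C \cong \PP^k$ is a linear subspace of $\PP^N$, the pertinent sequences are: the Euler sequence $0 \to \cO \to \cO(1)^{k+1} \to TC \to 0$ on $C = \PP^k$; the sequence $0 \to TC \to T\Phi|_C \to (\text{normal of the fibre}) \to 0$, where the normal bundle of the fibre $C$ in $\Phi$ is trivial of rank $\dim\Phi - k$ (fibres of a submersion have trivial normal bundle); the normal bundle sequence $0 \to TC \to TY|_C \to N_{C|Y} \to 0$; and the defining sequence $0 \to N^*_{Y|\PP^N} \to \cO(-1)^{?}\!\!$ — more cleanly, the twisted conormal sequence $0 \to N^*_{Y|\PP^N}(1) \to \cO(1)^{N+1}/(\text{something})$, or simplest of all just use $0 \to N^*_{Y|\PP^N} \to \Omega^1_{\PP^N}|_Y \to \Omega^1_Y \to 0$ together with the Euler sequence for $\PP^N$. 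Treating these together with the restricted generalized Euler sequence as linear equations in $K(C)$, one solves for the class of $N^*_{Y|\PP^N}$ (equivalently of $N^*_{Y|\PP^N}(1)$), and the dependence on the ambient $\PP^N$ and on $\dim\Phi$ should cancel just as the parameter $n$ got absorbed into a multiple of $(1+\cO(1))$ in the previous proof. The expected output is a clean identity in $K(C)$ expressing $N_{Y|\PP^N}(-1)|_C$ — or directly $N_{C|Y} - N^*_{C|Y}(1)$ — in terms of $\cO(1)$-classes that vanish.

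To close the argument I would relate $N_{Y|\PP^N}|_C$ to $N_{C|Y}$ via the standard exact sequences for the tower $C \subset Y \subset \PP^N$ and $C \subset \PP^k \subset \PP^N$: namely $0 \to N_{C|Y} \to N_{C|\PP^N} \to N_{Y|\PP^N}|_C \to 0$ and $0 \to N_{C|\PP^k} \to N_{C|\PP^N} \to N_{\PP^k|\PP^N}|_C \to 0$, using $N_{\PP^k|\PP^N} \simeq \cO(1)^{N-k}$ and $N_{C|\PP^k} = 0$ (since $C = \PP^k$ is all of its own linear span). This gives $N_{C|\PP^N} \simeq \cO(1)^{N-k}$ in $K(C)$ and hence $N_{Y|\PP^N}|_C \simeq \cO(1)^{N-k} - N_{C|Y}$. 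Feeding this into the $K(C)$-identity from the previous step, and comparing with its Serre-dual incarnation (or equivalently observing that the conormal variety $\Phi$ is symmetric under interchanging $Y$ and $Y^*$, which forces a self-duality), yields $N_{C|Y} = N^*_{C|Y}(1)$ in $K(C)$.

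The main obstacle is purely bookkeeping: there is no single deep step, but one must set up the five-or-so exact sequences with the correct twists — in particular getting the twist right in the generalized Euler sequence for $N^*_{Y|\PP^N}(1)$ and verifying the isomorphism $\cO_\Phi(1) \simeq g^*\cO_{{\PP^N}^*}(1)$ (this is the reflexivity/biduality $(Y^*)^* = Y$ made bundle-theoretic, i.e. $\Phi$ is literally the conormal variety and the two projectivization structures are dual). Once the twists are pinned down, solving the linear system in $K(C)$ and the final substitution are routine, exactly as in the proof of Proposition \ref{LQEL_k-thy_divisibility}.
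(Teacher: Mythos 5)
Your proposal follows essentially the same route as the paper: the generalized Euler sequence on $\Phi = \PP(N_{Y|\PP^N}^*(1))$ with the identification $\cO_\Phi(1)\simeq g^*\cO(1)$, restriction to the linear fibre $C_H(Y)$, reading the standard exact sequences (Euler on $C$ and $\PP^N$, normal sequence of $Y$, triviality of the fibre's normal bundle in $\Phi$) as relations in $K(C_H(Y))$, and then eliminating $N_{Y|\PP^N}|_C$ via $0\to N_{C|Y}\to N_{C|\PP^N}\simeq\cO(1)^{N-k}\to N_{Y|\PP^N}|_C\to 0$. The only cosmetic difference is your closing appeal to a ``Serre-dual/biduality'' symmetry, which is unnecessary: once $N_{Y|\PP^N}|_C$ is eliminated, the identity $N_{C|Y}=N_{C|Y}^*(1)$ falls out of the linear algebra exactly as in the paper.
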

  \begin{proof}[Proof:]
    Referring to \eqref{Phi_fibrations}, we have the generalized Euler sequence
    for $\Phi$:
    \begin{align}\label{Phi_gen_Euler_seq}
      0 \to \cO \to f^*N_{Y|\PP^N}^*(1) \otimes g^*\cO(1) \to T\Phi \to f^*TY \to 0
    \end{align}
    where we have used $\cO_{\PP(N_{Y|\PP^N}^*(1))}(1) \simeq g^*\cO(1)$ naturally.

    Restricting to the fibre $C_H(Y)$ of $g$ as in the proof of proposition
    \ref{LQEL_k-thy_divisibility} and bearing in mind that $C_H(Y)$ is a linear space
    we thus have the following natural exact sequences:
    \begin{equation*}
      \begin{CD}
        0 @>>> \cO     @>>> \cO(1)^{k+1}    @>>> TC_H(Y)         @>>> 0\\
        0 @>>> TY      @>>> T\PP^N|_Y       @>>> N_{Y|\PP^N}     @>>> 0\\
        0 @>>> \cO     @>>> \cO(1)^{N+1}    @>>> T\PP^N          @>>> 0\\
        0 @>>> TC_H(Y) @>>> T\Phi|_{C_H(Y)} @>>> \cO^{N - 1 - k} @>>> 0\\
      \end{CD}
    \end{equation*}

    Regarding these four exact sequences together with \eqref{Phi_gen_Euler_seq} as relations
    in $K(C_H(Y))$ we thus obtain:
    \begin{align}\label{K_reln_Landman}
      N_{Y|\PP^N}^*(1) - N_{Y|\PP^N} = (k - N)(\cO(1) - 1)\mbox{\quad in $K(C_H(Y))$}
    \end{align}

    Since we are restricting to $C_H(Y) \subset Y$ we can instead express this in terms of
    the normal bundle $N_{C|Y}$ of $C_H(Y)$ in $Y$ instead of $N_{Y|\PP^N}$. These are related by the
    natural exact sequence of bundles on $C_H(Y)$:
    \begin{align*}
      0 \to N_{C|Y} \to N_{C|\PP^N} \to N_{Y|\PP^N}|_{C_H(Y)} \to 0
    \end{align*}
    and since $C_H(Y) \subset \PP^N$ is linearly embedded $N_{C|\PP^N} \simeq \cO(1)^{N-k}$.
    Thus eliminating $N_{Y|\PP^N}$ the identity \eqref{K_reln_Landman} becomes:
    \begin{align}\label{normal_bundle_iso_Kthy}
      N_{C|Y} = N_{C|Y}^*(1) \mbox{\quad in $K(C_H(Y))$}
    \end{align}
  \end{proof}

  \begin{corollary}
    Let $Y \subset \PP^N$ be an $n$-dimensional non-singular irreducible projective
    variety with dual deficiency $k > 0$ then:
    \begin{align*}
      2 \mid n-k
    \end{align*}
  \end{corollary}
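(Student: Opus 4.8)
The plan is to read off the parity statement from the $K$-theory identity \eqref{normal_bundle_iso_Kthy} by pushing it down to first Chern classes on the contact locus. Recall that $C_H(Y)$ is the linear space $\PP^k$ (with $k\ge 1$, since $k>0$), linearly embedded in $\PP^N$, so that $\cO(1)$ restricts to the hyperplane bundle $\cO_{\PP^k}(1)$ and $H^2(C_H(Y),\Z)\cong\Z$ is generated by $c_1(\cO(1))$. Moreover $N_{C|Y}$ has rank $n-k=\codim\bigl(C_H(Y)\subseteq Y\bigr)$. So the whole content is already contained in the preceding proposition, and what remains is the elementary observation that an identity in $K$-theory between a bundle and a twist of its dual forces a parity constraint on the rank once it is restricted to a positive-dimensional projective space.

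Concretely, I would apply the first Chern class homomorphism $c_1\colon K(C_H(Y))\to H^2(C_H(Y),\Z)$ to \eqref{normal_bundle_iso_Kthy}. Using $c_1(E^*)=-c_1(E)$ and $c_1\bigl(E\otimes\cO(1)\bigr)=c_1(E)+(\rank E)\,c_1(\cO(1))$ for a bundle $E$, the identity $N_{C|Y}=N_{C|Y}^*(1)$ becomes
\[
c_1(N_{C|Y})=-c_1(N_{C|Y})+(n-k)\,c_1(\cO(1)),
\]
that is, $2\,c_1(N_{C|Y})=(n-k)\,c_1(\cO(1))$ in $H^2(C_H(Y),\Z)\cong\Z$. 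Since $c_1(\cO(1))$ is a generator of this group, the left-hand side is divisible by $2$, and hence $2\mid n-k$. This is precisely the Landman parity theorem, now obtained for topological reasons, exactly paralleling the deduction of Corollary \ref{LQEL_divisibility_corollary} from Proposition \ref{LQEL_k-thy_divisibility}.

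I do not expect any real obstacle here: the argument is entirely formal given the proposition. The only point that needs a word is the hypothesis $k>0$, which is exactly what guarantees that $C_H(Y)$ is a positive-dimensional linear space, so that $H^2(C_H(Y),\Z)$ is infinite cyclic on $c_1(\cO(1))$ and the class $c_1(\cO(1))$ is not $2$-divisible; for $k=0$ the relation is vacuous. (Equivalently, one can take determinant line bundles and work in $\Pic(\PP^k)\cong\Z$, obtaining $(\det N_{C|Y})^{\otimes 2}\simeq\cO(n-k)$ and the same conclusion.)
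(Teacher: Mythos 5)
Your argument is correct and is essentially identical to the paper's own proof: both take first Chern classes of the identity \eqref{normal_bundle_iso_Kthy}, use $\rank N_{C|Y}=n-k$ to obtain $2c_1(N_{C|Y})=(n-k)c_1(\cO(1))$, and conclude because $c_1(\cO(1))$ generates $H^2(C_H(Y),\Z)\simeq\Z$. Your additional remarks on the role of $k>0$ and the $\Pic$ reformulation are fine but add nothing beyond the paper's reasoning.
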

  \begin{proof}[Proof:]
    Take first Chern classes of each side in \eqref{normal_bundle_iso_Kthy}. Since
    $\rank N_{C|Y} = n - k$, we get:
    \begin{align*}
      2c_1(N_{C|Y}) = (n-k)c_1(\cO(1))
    \end{align*}
    The result then follows since
    $c_1(\cO(1)) \in H^2(C_H(Y), \Z) \simeq \Z$ is a generator.
  \end{proof}

  The above corollary is known as Landman's parity theorem and was first proved by Landman
  using Picard-Lefshetz theory (though not published). Subsequently Ein
  \cite{MR853445} (using a result of Kleiman) provided a proof in which he established that
  \eqref{normal_bundle_iso_Kthy} in fact holds as holomorphic bundles
  rather than just as stable topological bundles as we have shown (see also \cite{MR2113135}
  theorem 7.1 and \cite{IonescuRussoDD} proposition 3.1).

  We note that in contrast to proposition \ref{LQEL_k-thy_divisibility}, the fact that there exists
  a bundle satisfying the identity
  \eqref{normal_bundle_iso_Kthy} in $K(C_H(Y))$ does not contain more information than we
  have obtained by noting that
  $c_1(N_{C|Y})$ is integral. For example the bundle
  $V = (1\oplus\cO(1))\otimes \cO^{(n-k)/2}$
  has rank $n-k$ and
  satisfies $V \simeq V^*(1)$ for any $n, k$ as long as $2 \mid n - k$.
  There is thus no analogue of the stronger corollary
  \ref{LQEL_divisibility_corollary} in this context.

  On the other hand, the fact that it is not just any bundle but $N_{C|Y}$ that
  appears in \eqref{normal_bundle_iso_Kthy} does of course contain more data. For example
  if $Y$ is a non-singular scroll of fibre dimension $l$ and base dimension $m < l$
  we can use it to calculate $k$.

  Indeed since the contact locus for a scroll is necessarily contained in fibre, i.e.,
  $C_H(Y) \subseteq L \subseteq Y$ for a fibre $L$, we have the natural
  exact sequence of normal bundles:
  \begin{align*}
    0 \to N_{C|L} \to N_{C|Y} \to N_{L|Y}|_{C_H(Y)} \to 0
  \end{align*}
  but of course $N_{C|L} \simeq \cO(1)^{l - k}$ and $N_{L|Y} \simeq \cO^m$
  and so in $K(C_H(Y))$ we have:
  \begin{align*}
    N_{C|Y} = m + (l-k)\cO(1) \mbox{\quad in $K(C_H(Y))$}
  \end{align*}
  The only way this is compatible with \eqref{normal_bundle_iso_Kthy} is if $k = l-m$.

\appendix

\section{K-theory of the quadric}\label{kthy_quadric_appendix}
  To take full advantage of proposition \ref{LQEL_k-thy_divisibility} we need to
  know the ring structure of the K-theory of a non-singular quadric. Surprisingly,
  this does not seem\footnote{We should qualify this remark by saying that since
  the $n$-dimensional complex quadric is diffeomorphic to the oriented real Grassmannian
  $\tilde G(2, n+2)$, it might be possible to extract the result we need 
  from \cite{MR1796336}. However as $\tilde G(2, n+2)$ is an edge case for the calculations in
  \cite{MR1796336}, it was difficult to be certain if it was really covered. Furthermore
  the polynomial ring representation of the K-theory given in \cite{MR1796336} is not
  perfectly suited to our needs. For these reasons
  and because we needed to be sure of the correctness of this crucial result,
  we decided to work from first principles.}
  to appear in the literature so we provide the necessary results here.

  The calculation falls into two cases depending on whether the dimension of
  the quadric is odd or even. As a CW complex, the quadric has a cell
  decomposition with no odd-dimensional cells and one cell in each even dimension
  except for the middle dimension in the case of the even-dimensional quadric where
  there are two cells.
  Thus\footnote{See e.g., \cite{MR0224083} proposition 2.5.2.} if $F$ is our quadric
  then $K^1(F)$ vanishes
  and $K^0(F) = K(F)$ is free-Abelian with rank equal to the number of cells, i.e.:
  \begin{align}\label{rank_K_F}
    \rank K(F) = \left\{
    \begin{array}{ll}
      1+\dim F & \mbox{$\dim F$ odd}\\
      2+\dim F & \mbox{$\dim F$ even}
    \end{array}\right.
  \end{align}

  To determine the ring structure of $K(F)$, we need to use more sophisticated techniques.
  We shall represent $F$ as a homogeneous space so that we can use the methods
  of Atiyah and Hirzebruch \cite{MR0139181} and Hodgkin \cite{MR0388371}.
  Thus let $\dim F = m-1$ and recall that there is a diffeomorphism:
  \begin{align*}
    F \simeq \frac{SO(m+1)}{SO(2)\times SO(m-1)}
  \end{align*}
  In fact we need $F$ to be a homogeneous space of a simply-connected group.
  Thus we lift to the double-cover and so regard:
  \begin{align}\label{quadric_spin_homogeneous}
    F \simeq \frac{Spin(m+1)}{Spin^c(m-1)}
  \end{align}
  (We need to be a little careful with the above for $m=2, 3$ but there is no real problem.)

  In view of \eqref{quadric_spin_homogeneous} we see that
  representations of $Spin^c(m-1)$ give vector bundles on $F$. We wish to highlight
  the bundles corresponding to certain special representations.

  Thus consider the double cover $Spin(2)\times Spin(m-1)$ of $Spin^c(m-1)$ and suppose for
  now that $m$ is even. If we let $\Z[t, t^{-1}]$ be the representation ring of $SO(2)$, then
  $RSpin(2) = \Z[t^{1/2}, t^{-1/2}]$. In addition there is the unique irreducible spin representation
  $\delta$ of $Spin(m-1)$ since $m-1$ is odd. Neither $t^{1/2}$ nor $\delta$
  descends to $Spin^c(m-1)$ but their product does. We thus let:
  \begin{align*}
    X = \mbox{bundle on $F$ obtained from representation $t^{-1/2}\delta$ of $Spin^c(m-1)$}
  \end{align*}
  Similarly for $m$ odd we define the bundles $X^+, X^-$ by:
  \begin{align*}
    X^\pm = \mbox{bundle on $F$ obtained from representation $t^{-1/2}\delta^\pm$ of $Spin^c(m-1)$}
  \end{align*}
  where $\delta^\pm$ are the irreducible components of the spin representation
  (since $m-1$ is even).
  Note that $\rank X = 2^{m/2-1}$ and $\rank X^\pm = 2^{(m-1)/2}$.

  \begin{proposition}\label{quadric_k_theory}
    Let $F \subset \PP^{m}$ be an $(m-1)$-dimensional non-singular quadric, $m \ge 3$. Let
    $L = \cO(1) - 1 \in K(F)$. Suppose $m$ is even and let $X$ be the bundle defined above, then:
    \begin{itemize}
      \item $1, L, L^2, \ldots L^{m-2}, X$ are a $\Z$-basis for the torsion-free ring $K(F)$
      \item $L^m = 0$ (obviously, for dimensional reasons)
      \item $LX = 2^{m/2} - 2X$
      \item $2^{m/2} X = 2^{m-1} - 2^{m-2}L + \cdots + 2L^{m-2} - L^{m-1}$ (this
            is equivalent to previous bullet but shows why we need $X$ instead of $L^{m-1}$)
    \end{itemize}
    (There is also a slightly-complicated formula for $X^2$ which we don't need so we
    suppress.)

    Similarly if $m$ is odd and $X^\pm$ are the bundles defined above, then:
    \begin{itemize}
      \item $1, L, L^2, \ldots, L^{m-2}, X^+, X^-$ are a $\Z$-basis for the torsion-free ring $K(F)$
      \item $L^m = 0$
      \item $LX^\pm = 2^{(m-1)/2} - X^\pm - X^\mp$
      \item $2^{(m-1)/2}(X^+ + X^-) = 2^{m-1} - 2^{m-2}L + \cdots - 2L^{m-2} + L^{m-1}$
    \end{itemize}
    (Again there are slightly-complicated formulae for $(X^\pm)^2$ and $X^+X^-$ which
    we suppress.)
  \end{proposition}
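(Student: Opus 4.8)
The plan is to exploit the homogeneous presentation \eqref{quadric_spin_homogeneous} and to compute the K-theory of this homogeneous space by the method of Atiyah--Hirzebruch \cite{MR0139181} and Hodgkin \cite{MR0388371}. Write $G = Spin(m+1)$ and $H = Spin^c(m-1)$. Since $G$ is simply connected and $H$ has maximal rank in $G$, the module $R(H)$ is free over $R(G)$ (Pittie, Steinberg), so the relevant spectral sequence collapses and one obtains $K^1(F) = 0$ together with a ring isomorphism
  \begin{align*}
    K(F) \simeq R(H)\otimes_{R(G)}\Z = R(H)/I\cdot R(H),
  \end{align*}
  where $I\subset R(G)$ is the augmentation ideal; a free $R(G)$-basis of $R(H)$ also reproduces the rank count \eqref{rank_K_F}. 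The whole statement is thereby reduced to a computation inside representation rings.

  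First I would identify the relevant classes. The inclusion $F\hookrightarrow\PP^m$ matches $\cO(1)$ with the character $t$ of the $SO(2)$-factor, so $L = \cO(1) - 1$ corresponds to $t - 1$, while $X$ (resp. $X^\pm$) is by construction the class of $t^{-1/2}\delta$ (resp. $t^{-1/2}\delta^\pm$). Next I would work out the restriction map $R(G)\to R(H)$ using the orthogonal splitting $\C^{m+1} = \C^2\oplus\C^{m-1}$ preserved by $H$: the vector representation restricts as $V|_H = t + t^{-1} + V'$, where $V'$ is the vector representation underlying $R(Spin(m-1))$, and its exterior powers $\lambda^i V$ decompose accordingly, while the spin representation restricts as $\Delta|_H = (t^{1/2}+t^{-1/2})\delta$ for $m$ even and as $\Delta^\pm|_H = t^{1/2}\delta^\pm + t^{-1/2}\delta^\mp$ for $m$ odd, since the spin representation of an orthogonal direct sum is the external tensor product of spin representations. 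One also needs $R(H) = R(Spin^c(m-1))$ described as the diagonal $\Z/2$-invariants inside $\Z[t^{1/2},t^{-1/2}]\otimes R(Spin(m-1))$; this exhibits $t^{\pm 1}$, the $\lambda^i V'$, and $X$ (resp. $X^\pm$) as algebra generators of $R(H)$.

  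Passing to $K(F) = R(H)/I\cdot R(H)$ now amounts to imposing $\rho|_H = \dim\rho$ for the generators $\rho$ of $R(G)$. The relation $V|_H = m+1$, together with the nilpotence of $L$ --- indeed $L^m = 0$ because $F$ is $(m-1)$-dimensional with a CW structure having only even cells --- lets one invert $1+L$ as $t^{-1} = 1 - L + L^2 - \cdots$, and the exterior-power relations then force $V'$ and each $\lambda^i V'$ to be an explicit polynomial in $L$; thus the image of $R(SO(m+1))$ in $K(F)$ is the subring generated by $L$, a quotient of $\Z[L]/(L^m)$. Feeding in the spin relation $\Delta|_H = \dim\Delta = 2^{m/2}$ gives $t^{1/2}\delta + t^{-1/2}\delta = 2^{m/2}$, i.e. $t^{1/2}\delta = 2^{m/2} - X$, and hence
  \begin{align*}
    LX = (t-1)\,t^{-1/2}\delta = t^{1/2}\delta - t^{-1/2}\delta = 2^{m/2} - 2X,
  \end{align*}
  which is the third bullet; solving $(L+2)X = 2^{m/2}$ for $X$ using $L^m = 0$ then yields the fourth. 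The case $m$ odd is handled the same way: imposing $\Delta^\pm|_H = 2^{(m-1)/2}$ gives $(1+L)X^\pm + X^\mp = 2^{(m-1)/2}$, from which the stated relations for $X^\pm$ follow, again with the help of $L^m = 0$.

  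It remains to upgrade this to the basis statement. The first paragraph already gives $\rank K(F)$, which equals the number of proposed basis elements, so only the $\Z$-linear independence of $1, L, \dots, L^{m-2}, X$ (resp. with $X^\pm$) needs to be verified. I expect this, rather than any of the ring relations --- which are essentially formal once the restriction map is in hand --- to be the main obstacle: the relations by themselves show only that the listed classes span a subgroup of finite index, since $L^{m-1}$ is recovered from them only up to the factor $2^{m/2}$ (resp. $2^{(m-1)/2}$) visible in the last bullet. The clean way around this is to extract an explicit free $R(G)$-basis of $R(H)$ and check that it maps onto the proposed classes; alternatively, one verifies independence after applying the Chern character to the torsion-free group $K(F)$, computing $\mathrm{ch}(X)$ from $X = t^{-1/2}\delta$ together with the (generalized) Euler sequence for $TF$. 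Some care is also required with the $Spin$ versus $Spin\times Spin$ bookkeeping --- half-integer weights and signs --- and with the small cases $m = 2, 3$ flagged after \eqref{quadric_spin_homogeneous}.
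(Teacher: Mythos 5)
Your overall route is the paper's: present $F$ as $Spin(m+1)/Spin^c(m-1)$, invoke Hodgkin's isomorphism $K(F)\simeq RH/RH\cdot I$, compute the restriction map on the vector and spin representations (your formulas $\Delta|_H=(t^{1/2}+t^{-1/2})\delta$, resp. $\Delta^\pm|_H=t^{1/2}\delta^\pm+t^{-1/2}\delta^\mp$, and $V|_H=t+t^{-1}+V'$ are exactly the ones used there), and read off the relations $(2+L)X=2^{m/2}$, resp. $(1+L)X^\pm+X^\mp=2^{(m-1)/2}$. Up to that point the proposal is sound.

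The problem is the final step, where you misdiagnose what remains and leave it unexecuted. You assert that the relations only show the listed classes span a finite-index subgroup ``since $L^{m-1}$ is recovered from them only up to the factor $2^{m/2}$''; this is backwards. The last bullet reads $L^{m-1}=2^{m-1}-2^{m-2}L+\cdots+2L^{m-2}-2^{m/2}X$, an \emph{integral} expression for $L^{m-1}$ in the proposed basis elements; it is $X$ that is determined by the powers of $L$ only up to the factor $2^{m/2}$, which is precisely why $X$ is kept in the basis and $L^{m-1}$ is dropped. Consequently no independence argument via a Chern character computation or an explicit Steinberg/Pittie basis of $RH$ over $RG$ is needed (and you do not carry either out). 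The correct completion is the paper's: Hodgkin's surjection $RH\to K(F)$ plus the elimination of the $\lambda_i$ via the exterior-power relations, of $t^{-1}$ via $L^m=0$, and of higher powers of $X$ via the relation $X^2=t^{-1}(1+\sum\lambda_i^2)$ (a step your sketch omits but needs) shows that $1,L,\ldots,L^{m-1},X$ span $K(F)$ over $\Z$; the displayed formula for $2^{m/2}X$ then removes $L^{m-1}$, leaving a generating set of $m$ elements (resp. $m+1$ for $m$ odd), and since $K(F)$ is free abelian of rank given by \eqref{rank_K_F}, a generating set of that cardinality is automatically a $\Z$-basis. One further small point: deriving the fourth bullet by ``solving $(2+L)X=2^{m/2}$ for $X$'' requires either that $2+L$ be a non-zero-divisor or an appeal to torsion-freeness of $K(F)$ together with nilpotence of $L$; alternatively multiply the spin relation by $X$ and use the $X^2$ formula, which is how the stated identity falls out directly.
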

  \textbf{Proof} For brevity, let $G = Spin(m+1)$ and $H = Spin^c(m-1)$. We will use the methods 
  of Atiyah and Hirzebruch \cite{MR0139181} \S 5
  as well as Hodgkin \cite{MR0388371} to compute $K(G/H)$. Indeed as 
  pointed out by Atiyah and Hirzebruch, there is a natural map:
  \begin{align*}
    RH \to K(G/H)
  \end{align*}
  Now $H$ is a maximal-rank subgroup of $G$ and so $RG \subset RH$.
  The restriction to $RG$ gives only trivial
  bundles so if we let $RG$ act on $\Z$ by dimension then we have a natural
  map:
  \begin{align*}
    RH\otimes_{RG}\Z \to K(G/H)
  \end{align*}
  Hodgkin (\cite{MR0388371} page 71)
  proves this map is an isomorphism since $\pi_1(G) = 1$ and $H$ has maximal rank. Furthermore
  there is a natural exact sequence of $RH$-modules:
  \begin{align*}
    0 \to RH\cdot I \to RH \to RH\otimes_{RG}\Z \to 0
  \end{align*}
  where $I \subset RG \subset RH$ is the augmentation ideal of $RG$
  (i.e., the kernel of the dimension map $RG \to \Z$).
  In other words for general reasons we have a natural ring isomorphism:
  \begin{align}\label{K_thy_homogeneous_space}
    K(F) \simeq RH / RH\cdot I
  \end{align}
  To put this to use we need an explicit realization of three things:
  \begin{itemize}
    \item $RG$ and the dimension map $RG \to \Z$ with kernel $I$
    \item $RH$
    \item The inclusion $RG \hookrightarrow RH$
  \end{itemize}
  We must now separately consider the two cases $m$ even and $m$ odd. We consider first
  the slightly-simpler case $m$ even.

  We shall follow the notation of Husemoller \cite{MR1249482}; by Theorem 10.3 op. cit.
  we have that $RG$ is a polynomial ring:
  \begin{align*}
    RG &\simeq \Z[\Lambda_1, \Lambda_2, \ldots, \Lambda_{m/2-1}, \Delta]
  \end{align*}
  and $\Delta^2 = 1 + \Lambda_1^2 + \cdots + \Lambda_{m/2-1}^2 + \Lambda_{m/2}^2$.

  Now $H = (Spin(2)\times Spin(m-1))/\{\pm 1\}$ and so we have:
  \begin{align*}
    RH \simeq (RSpin(2)\otimes RSpin(m-1))^{\Z/(2)}
  \end{align*}
  If we let\footnote{We need to be a little careful for the case $m=4$ below but there
  is no real problem. However the statement clearly does not hold for $m=2$; hence the
  assumption $m \ge 3$ in the proposition statement.}:
  \begin{align*}
    RSpin(2)    &= \Z[t^{1/2}, t^{-1/2}]\\
    RSpin(m-1) &= \Z[\lambda_1, \ldots, \lambda_{m/2-2}, \delta]
  \end{align*}
  then as above $\delta^2 = 1 + \lambda_1^2 + \cdots + \lambda_{m/2-1}^2$.
  The $\Z/(2)$ action fixes the $\lambda_i$ and changes the sign of $\delta$ as well
  as the half-integral powers of $t$. We thus obtain:
  \begin{align}\label{RH}
    RH \simeq \Z[t, t^{-1}, \lambda_1, \ldots, \lambda_{m/2-1}, X]
  \end{align}
  where $X = t^{-1/2}\delta$.
  Note that the above ring is not quite a polynomial ring, it is a quotient
  by the ideal generated by the relation:
  \begin{align}\label{X_k_reln}
    X^2 = t^{-1}(1 + \lambda_1^2 + \cdots + \lambda_{m/2-1}^2)
  \end{align}

  Finally the map $RG \hookrightarrow RH$ is described by:
  \begin{align*}
    \Delta = (t^{1/2} + t^{-1/2})\delta = (1+t)X
  \end{align*}
  and
  \begin{align*}
    \Lambda_i = \lambda_i + (t + t^{-1})\lambda_{i-1} + \lambda_{i-2}
  \end{align*}
  for $1 \le i \le m/2-1$
  provided we agree that $\lambda_0 = 1$ and $\lambda_{-1} = 0$.
  By \eqref{K_thy_homogeneous_space} we thus have the following relations
  between the images of elements of $RH$ in $K(G/H)$:
  \begin{align}
    (1+t)X &= \dim\Delta = 2^{m/2}\label{spin_k_reln}\\
    \lambda_i + (t + t^{-1})\lambda_{i-1} + \lambda_{i-2} &= \dim\Lambda_i\label{lambda_k_reln}
  \end{align}

  Using \eqref{lambda_k_reln} inductively we remove the $\lambda_i$ from any polynomial expression
  in $RH$ given by \eqref{RH}
  and have only expressions involving $t, t^{-1}$ instead. In other words we
  thus have a surjection from $\Z[t, t^{-1}, X]$ to $K(F)$.

  Now it is easier to
  work with nilpotent elements so let $L = t-1$. Note that
  $t$ corresponds to $\cO(1)$ so this is indeed the $L$ in the proposition statement. Then
  $L^{m} = 0$ for dimensional reasons (its image under Chern character would
  lie in cohomology of degree at least $2m$ and $\dim_\R F = 2m-2$) and so we have:
  \begin{align*}
    t^{-1} = 1 - L + L^2 - \cdots - L^{m-1}
  \end{align*}
  We thus have a surjection $\Z[L, X]$ to $K(F)$. Combining this with the
  relation \eqref{X_k_reln} we see that $K(F)$ is spanned over $\Z$
  by the classes represented by: $L^i, XL^i$ for $0 \le i \le m-1$.
  From here using \eqref{spin_k_reln} we see that $K(F)$ is spanned by: $L^i, X$
  for $0 \le i \le m-1$ and then finally elementary computation reveals:
  \begin{align*}
    2^{m/2} X = 2^{m-1} - 2^{m-2}L + \cdots + 2L^{m-2} - L^{m-1}
  \end{align*}
  Thus we can omit $L^{m-1}$ and still have a spanning set. Since there are
  $m$ elements in this set and we know by \eqref{rank_K_F} that
  the rank of $K(F)$ is $m$, this must be a $\Z$-basis
  as required. This deals with the case $m$ even.
  
  The argument for the case $m$ odd is extremely similar.
  For the methods below we need to assume $m \ge 5$ but the result for
  the case $m=3$ is easily verified since in this case $F \simeq S^2\times S^2$.

  This time we have:
  \begin{align*}
    RG &\simeq \Z[\Lambda_1, \ldots, \Lambda_{(m-3)/2}, \Delta^+, \Delta^-]\\
    RSpin(m-1) &\simeq \Z[\lambda_1, \ldots, \lambda_{(m-5)/2}, \delta^+, \delta^-]
  \end{align*}
  and:
  \begin{align*}
    (\Delta^\pm)^2 &= \Lambda_\pm + \Lambda_{(m-3)/2} + \Lambda_{(m-7)/2} + \cdots\\
    \Delta^+\Delta^- &= \Lambda_{(m-1)/2} + \Lambda_{(m-5)/2} + \cdots
  \end{align*}
  where $\Lambda_{(m+1)/2} = \Lambda_+ + \Lambda_-$ and
  the series end in $1$ or $\Lambda_1$ according to parity (and similarly for $\delta^\pm$ and
  $\lambda_\pm$). Then similarly to the case
  $m$ even we have:
  \begin{align*}
    RH \simeq \Z[t, t^{-1}, \lambda_1, \ldots, \lambda_{(m-5)/2}, \lambda_+, \lambda_-, X^+, X^-]
  \end{align*}
  where $X^\pm = t^{-1/2}\delta^\pm$ and the map $RG \hookrightarrow RH$ is given
  by the same relation between the $\lambda_i$ and $\Lambda_i$ as for $m$ even but:
  \begin{align*}
    \Delta^+ &= t^{1/2}\delta^+ + t^{-1/2}\delta^-\\
    \Delta^- &= t^{1/2}\delta^- + t^{-1/2}\delta^+
  \end{align*}
  Using these formulae, the same argument goes through just as for $m$ even to yield
  the stated results. \qed

  \begin{corollary}\label{general_quadric_division_corollary}
    Let $F \subset \PP^m$ be a non-singular quadric hypersurface, $m \ge 3$,
    and suppose $1 + \cO(1)$ divides
    $l$ in $K(F)$ for some $l \in \Z$ then:
    \begin{align*}
      \left. 2^{\left[\frac{m+1}{2}\right]} ~\right|~ l \mbox{\quad in $\Z$}
    \end{align*}
    (The brackets in the power denote the integer part.)
  \end{corollary}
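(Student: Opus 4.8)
The plan is to work directly with the explicit ring presentation of $K(F)$ furnished by Proposition \ref{quadric_k_theory}, reduce the divisibility statement to a statement about a single structure constant, and then read off the answer. Write $L = \cO(1) - 1$, so $1 + \cO(1) = 2 + L$, and suppose $(2+L)W = l$ for some $W \in K(F)$. Using the $\Z$-basis $1, L, \ldots, L^{m-2}, X$ (resp.\ $\ldots, X^+, X^-$) we expand $W = a_0 + a_1 L + \cdots + a_{m-2}L^{m-2} + bX$ with integer coefficients, multiply by $2 + L$, and use the relations $L^m = 0$, $LX = 2^{m/2} - 2X$ (resp.\ $LX^\pm = 2^{(m-1)/2} - X^\pm - X^\mp$) to rewrite $(2+L)W$ back in the same basis. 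Matching coefficients against $l = l\cdot 1$ gives a triangular linear system for the $a_i$ and $b$ in terms of $l$. The point is that the system is "almost" forced: $a_{m-2}, a_{m-3}, \ldots$ are determined successively, and the coefficient of $X$ in the product must vanish, which pins down $b$ and then propagates a constraint back onto $l$.

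First I would do the even case. Matching the coefficient of $L^{m-2}$ gives $2a_{m-2} + a_{m-3} + (\text{contribution of }bX) = 0$; iterating downward expresses every $a_i$ as a $\Z$-linear combination of $b$ and $l$, and the $X$-coefficient equation becomes $2b - 2b\cdot(\text{stuff}) + \cdots$, ultimately forcing a relation of the shape $2^{m/2} \mid (\text{linear combination involving } l \text{ and } b)$, and then re-substituting forces $2^{m/2} \mid l$; since $\left[\frac{m+1}{2}\right] = m/2$ for $m$ even, this is exactly the claim. Concretely: from $(2+L)W = l$, taking the image under the ring homomorphism $K(F) \to K(F)/(L) \cong \Z$ (reduction killing $L$, i.e.\ evaluating at $\cO(1) = 1$) is too lossy, so instead I would use that $(2+L)$ is invertible in $K(F)\otimes\Z[\tfrac12]$ — indeed $(2+L)$ has $L$-adic unit leading term — hence $W = l/(2+L)$ is determined, and the content is precisely that this element lies in the integral lattice $K(F)$. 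So the cleanest route is: compute $1/(2+L) = \tfrac12(1 - \tfrac{L}{2} + \tfrac{L^2}{4} - \cdots \pm \tfrac{L^{m-1}}{2^{m-1}})$ in $K(F)\otimes\Q$, then use the relation $2^{m/2}X = 2^{m-1} - 2^{m-2}L + \cdots - L^{m-1}$ (equivalently $L^{m-1} = 2^{m-1} - 2^{m-2}L + \cdots - 2^{m/2}X$) to rewrite $l/(2+L)$ in the basis $1, L, \ldots, L^{m-2}, X$ and demand that all coefficients be integers; the worst denominator that appears is $2^{m/2}$, giving $2^{m/2}\mid l$.

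The odd case is handled identically but with the extra basis element: one uses $L^{m-1} = 2^{m-1} - 2^{m-2}L + \cdots - 2^{(m-1)/2}(X^+ + X^-)$ together with the fact that the relation $LX^\pm = 2^{(m-1)/2} - X^\pm - X^\mp$ only controls the \emph{symmetric} combination $X^+ + X^-$ — but that is all that appears when we reduce powers of $L$, since $L$ acts symmetrically in $X^+, X^-$. So again $l/(2+L)$ expands with worst denominator $2^{(m-1)/2} = 2^{[(m+1)/2]}$, forcing $2^{[(m+1)/2]} \mid l$. The main obstacle I anticipate is purely bookkeeping: one must verify that in expanding $\tfrac12\sum_{j=0}^{m-1}(-1)^j L^j/2^j$ and then substituting the top relation for $L^{m-1}$ (and, if $m-1$ divided further, nothing else is needed since $L^m = 0$), no cancellation \emph{improves} the power of $2$ in the denominator — i.e.\ that the bound $2^{[(m+1)/2]}$ is not merely an upper bound on what could go wrong but is actually achieved by the coefficient of $X$ (resp.\ $X^+ + X^-$). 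Checking that the coefficient of $X$ in $l/(2+L)$ equals (up to a unit) $l/2^{m/2}$ — and hence that integrality is \emph{equivalent} to $2^{m/2}\mid l$ — is the one computation that needs to be done carefully; everything else is formal manipulation in the presented ring.
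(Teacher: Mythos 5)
Your strategy is sound and, once the coefficient computation you flag is actually carried out, it does prove the corollary; in substance it is the paper's argument run in reverse, with more bookkeeping. Concretely: since $L$ is nilpotent and $K(F)$ is torsion-free, any $W$ with $(2+L)W=l$ must equal $l(2+L)^{-1}$ in $K(F)\otimes\Q$; expanding $(2+L)^{-1}=\tfrac12\sum_{j=0}^{m-1}(-1)^jL^j/2^j$ and substituting the top relation for $L^{m-1}$, every coefficient of $L^j$ with $0\le j\le m-2$ cancels identically, leaving $l(2+L)^{-1}=l\,2^{-m/2}X$ for $m$ even and $l\,2^{-(m+1)/2}(X^++X^-)$ for $m$ odd; integrality of the coefficient of the primitive basis element $X$ (resp.\ $X^+$) then gives the claim. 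This is exactly the paper's one-line identity $(2+L)X=2^{\left[\frac{m+1}{2}\right]}$ (with $X=X^++X^-$ when $m$ is odd), which the paper reads off directly from the relation $LX=2^{m/2}-2X$ (resp.\ $LX^{\pm}=2^{(m-1)/2}-X^+-X^-$) and then concludes using that $2+L$ is a non-zero-divisor and $X$ is part of a $\Z$-basis; that formulation avoids the series expansion and the cancellation worry entirely. Two corrections to your write-up: in the odd case the denominator carried by $X^++X^-$ is $2^{(m+1)/2}$, not $2^{(m-1)/2}$ --- your equation $2^{(m-1)/2}=2^{\left[\frac{m+1}{2}\right]}$ is off by a factor of $2$, and if the denominator really were $2^{(m-1)/2}$ you would only obtain a strictly weaker bound; the correct coefficient is $\frac{l}{2^m}\cdot 2^{(m-1)/2}=l/2^{(m+1)/2}$, which is what the corollary requires. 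Second, the cancellation issue you raise resolves itself for a structural reason worth stating: $X$ (resp.\ $X^{\pm}$) enters the expansion only through the substitution for $L^{m-1}$, so its coefficient is exactly $l/2^{\left[\frac{m+1}{2}\right]}$ with no competing contributions, while the vanishing of all the $L^j$-coefficients is the statement that $(2+L)\cdot 2^{-\left[\frac{m+1}{2}\right]}X=1$, i.e.\ the paper's identity again.
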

  \begin{proof}[Proof:]
    Set $L = \cO(1) - 1$ as in the proposition. If $m$ is even, let
    $X$ be as in the proposition and if $m$ is odd, let $X = X^+ + X^-$. Note that
    in either case we then have:
    \begin{align*}
      (1 + \cO(1))X = (2+L)X = 2^{\left[\frac{m+1}{2}\right]}
    \end{align*}
    Since $2+L$ is not a zero divisor and $X$ is part of a $\Z$-basis
    the result follows.
  \end{proof}

\bibliography{paper}
\bibliographystyle{plain}
\end{document}